\theoremstyle{plain}
\newtheorem{theorem}{Theorem}[section]
\newtheorem{proposition}[theorem]{Proposition}
\theoremstyle{definition}
\newtheorem{example}[theorem]{Example}
\theoremstyle{remark}
\newtheorem{remark}[theorem]{Remark}
\begin{document}
\title[]{Rational approximations to fractional powers of self-adjoint positive operators}
\thanks{This work was partially supported by GNCS-INdAM, University of Pisa (Grant
PRA$\_$2017$\_$05) and  FRA-University of Trieste. \\
The authors are members of the INdAM research group GNCS.}
\subjclass[2010]{47A58, 65F60, 65D32}
\keywords{Fractional Laplacian,   Matrix functions,
Gauss-Jacobi rule, Pad\'{e} approximants}
\author{Lidia Aceto}
\address{Lidia Aceto\\ Universit\`{a} di Pisa\\
 Dipartimento di Matematica, via F. Buonarroti 1/C - 56127 Pisa\\
Italy}
\email{lidia.aceto@unipi.it}
\author{Paolo Novati}
\address{Paolo Novati \\ Universit\`{a} di Trieste\\
Dipartimento di Matematica e Geoscienze, via Valerio 12/1,  34127 Trieste\\
Italy}
\email{novati@units.it}
 
\maketitle

\begin{abstract}
We investigate the rational approximation of fractional powers of unbounded
positive operators attainable with a specific integral representation of the
operator function. We provide accurate error bounds by exploiting classical
results in approximation theory involving Pad\'{e} approximants. The
analysis improves some existing results and the numerical experiments proves
its accuracy.
\end{abstract}


\section{Introduction} 

\label{intro}

Let $\mathcal{H}$ be a separable Hilbert space with inner product $%
\left\langle \cdot ,\cdot \right\rangle $ and corresponding norm $\left\Vert
x\right\Vert =\left\langle x,x\right\rangle ^{1/2}$. Let $\mathcal{L}$ be a
self-adjoint positive operator with spectrum $\sigma (\mathcal{L})\subseteq \lbrack
c,+\infty ),$ $c>0$. Moreover, assume that $\mathcal{L}$ has compact
inverse. This paper deals with the numerical approximation of $\mathcal{L}%
^{-\alpha },$ $0<\alpha <1$, that, in this setting, can be defined through
the spectral decomposition, i.e.,
\[
\mathcal{L}^{-\alpha }u=\sum_{s=1}^{\infty }\mu _{s}^{-\alpha } 
\left\langle u,\varphi_{s}\right\rangle \varphi_{s},
\]
where $\{\varphi _{s}\}_{s=1}^{\infty }$ is the orthonormal system of eigenfunctions of $\mathcal{%
L}$ and $\{\mu _{s}\}_{s=1}^{\infty }$ is the corresponding sequence of positive real
eigenvalues (arranged in order of increasing magnitude and counted according
to their multiplicities). Clearly $\mathcal{L}^{-\alpha }$ is a self-adjoint
compact operator on $\mathcal{H}$. Since the function $\lambda ^{-\alpha }$
is continuous in $[c,+\infty )$, we have that (see e.g. \cite[Theorem 1.7.7]%
{Ring})
\[
\left\Vert \mathcal{L}^{-\alpha }\right\Vert =\sup_{\lambda \in \sigma (%
\mathcal{L})}\left\vert \lambda ^{-\alpha }\right\vert =\mu _{1}^{-\alpha }.
\]

An important and widely studied example comes from certain fractional models
involving the symmetric space fractional derivative $(-\Delta )^{\beta /2}$
of order $\beta $ ($1<\beta \leq 2$) \cite{Ka}; in this situation the
fractional power is generally approximated through the approximation of $%
(-\Delta )^{\beta /2-1}$ \cite{Il05}.

A standard approach to approximate $\mathcal{L}^{-\alpha }$ is by means of $%
\mathcal{L}_{N}^{-\alpha }$ where $\mathcal{L}_{N}$ is a finite dimensional
self-adjoint positive operator representing a discretization of $\mathcal{L}.$
Clearly, improving the sharpness of the discretization the typical
situation is that $\lambda _{\min }(\mathcal{L}_{N})\rightarrow c$ and $%
\lambda _{\max }(\mathcal{L}_{N})\rightarrow +\infty $ ($\lambda _{\min }(%
\mathcal{L}_{N})$ and $\lambda _{\max }(\mathcal{L}_{N})$ denoting the
smallest and the largest eigenvalues of $\mathcal{L}_{N}$).

In this framework, in order to compute $\mathcal{L}_{N}^{-\alpha }$
it is quite natural to employ rational forms. For instance, in \cite{HetV,HetP}
some rational approximations are obtained by considering 
the best uniform rational approximation of $\lambda^{1-\alpha}$ and $\lambda^\alpha$ on the interval $[0,1]$.
Beside, other well established techniques are the ones
based on existing integral representations of the Markov function $\lambda
^{-\alpha }$ and then on the use of suitable quadrature rules that finally
lead to rational approximations of the type%
\[
\mathcal{L}_{N}^{-\alpha }\approx \mathcal{R}_{k-1,k}(\mathcal{L}_{N}),\quad
\mathcal{R}_{k-1,k}(\lambda )=\frac{p_{k-1}(\lambda )}{q_{k}(\lambda )},\quad
p_{k-1}\in \Pi _{k-1},\,q_{k}\in \Pi _{k},
\]
(see e.g.  \cite{Bo,FI,No}).  

In this setting, in \cite{AMN,AN,AN2} the rational forms arise from the use
of the Gauss-Jacobi rule for computing the integral representation (see \cite[Eq. (V.4) p. 116]{RB}) 
\begin{equation}
\mathcal{L}^{-\alpha }=\frac{\sin (\alpha \pi )}{(1-\alpha )\pi }%
\int_{0}^{\infty }(\rho ^{1/(1-\alpha )}I+\mathcal{L})^{-1}d\rho ,
\label{mat}
\end{equation}%
after the change of variable%
\begin{equation}
\rho ^{1/(1-\alpha )}=\tau \frac{1-t}{1+t},\qquad \tau >0.  \label{jss}
\end{equation}%
Working in finite dimension, the asymptotically optimal choice of the
parameter $\tau $, yields an error of type
\begin{equation}
\mathcal{O}\left( \exp \left( -4k\sqrt[4]{\lambda _{\min }(\mathcal{L}%
_{N})/\lambda _{\max }(\mathcal{L}_{N})}\right) \right) ,  \label{erb1}
\end{equation}%
where $k$ is the number of points of the quadrature rule, corresponding to a
$\mathcal{R}_{k-1,k}(\lambda )$ rational form. Of course $\sqrt[4]{\lambda
_{\min }(\mathcal{L}_{N})/\lambda _{\max }(\mathcal{L}_{N})}\rightarrow 0$\
improving the quality of the discretization so that (\ref{erb1}) becomes
meaningless whenever $\mathcal{L}_{N}$ represents an arbitrarily sharp
discretization of $\mathcal{L}$.

The basic aim of the present work is to overcome this problem by working in
the infinite dimensional setting. Using the fact that the Gauss-Jacobi
quadrature on Markov functions is related to the Pad\'{e} approximation, we
derive an expression for the truncation error $\lambda ^{-\alpha }-\mathcal{R%
}_{k-1,k}(\lambda ):=\lambda ^{-\alpha }-\tau ^{-\alpha }R_{k-1,k}(\lambda
/\tau )$  (here  $R_{k-1,k}(\lambda
/\tau)$ denotes the $(k-1,k)$-Pad\'{e} approximant of $(\lambda /\tau )^{-\alpha }$),
that leads to an alternative definition of the parameter $\tau $
independent of the discretization and, at the same time, ensuring an
asymptotically optimal rate of convergence. In particular, we are able to
show that the quadrature nodes for (\ref{mat}) can be defined so that the
error for the computation of $\mathcal{L}^{-\alpha }$ decays approximatively
like
\[
\left\Vert \mathcal{L}^{-\alpha }-\tau ^{-\alpha }R_{k-1,k}\left( \frac{%
\mathcal{L}}{\tau }\right) \right\Vert \approx \sin (\alpha \pi ){c^{-\alpha
}\left( \frac{2ke^{1/2}}{\alpha }\right) ^{-4\alpha },}
\]
and therefore sublinearly. Qualitatively, a similar behavior can also been
observed by working with rational Krylov methods to approximate the action
of $\mathcal{L}^{-\alpha }$ (see \cite{Mo}). The sublinearity appears when
considering unbounded spectra. Using the analysis for unbounded operators,
we also show how to improve quantitatively (\ref{erb1}) whenever we assume
to work with $\mathcal{L}_{N}$. The key point consists in taking $\tau $ in (%
\ref{jss}) dependent on $k.$ 

We remark that all the theory here developed can be easily employed to
compute the action of the unbounded operator $\mathcal{L}^{1-\alpha }$ on a
vector $f\in D(\mathcal{L})$ ($D(\mathcal{L})$ is the domain of $\mathcal{L}$%
), that is $\mathcal{L}^{1-\alpha }f$. This may occurs for instance when
solving equations involving the above mentioned fractional Laplacian. In
this situation, after evaluating $g=\mathcal{L}f$, $\mathcal{L}^{1-\alpha }f$
can be computed using our analysis on $\mathcal{L}^{-\alpha }g$.
Nevertheless, the poles of the rational forms here derived can also be used
to compute $\mathcal{L}^{-\alpha }g$ by means of a rational Krylov method.

The paper is organized as follows. In Section \ref{sec2} we recall the basic
features of the Gauss-Jacobi based rational forms for computing (\ref{mat}).
Section \ref{sec3} contains the error analysis and represents the main
contribute of this paper. In Section \ref{sec4} we revisit the error
analysis for the case of bounded spectra. Finally, in Section \ref{sec6} we
present some numerical experiments that validate the theoretical results.


\section{Background on the Gauss-Jacobi approach}

\label{sec2}

Starting from the representation (\ref{mat}), in order to approximate the
fractional Laplacian in \cite{AN} the authors consider the change of
variable (\ref{jss}), that leads to
\begin{equation}
\mathcal{L}^{-\alpha }=\frac{2\sin (\alpha \pi )\tau ^{1-\alpha }}{\pi }%
\int_{-1}^{1}\left( 1-t\right) ^{-\alpha }\left( 1+t\right) ^{\alpha
-2}\left( \tau \frac{1-t}{1+t}I+\mathcal{L}\right) ^{-1}dt.  \label{nint}
\end{equation}%
Using the $k$-point Gauss-Jacobi rule with respect to the weight function $%
\omega (t)=\left( 1-t\right) ^{-\alpha }\left( 1+t\right) ^{\alpha -1}$ the above
integral is approximated by the rational form
\begin{equation}
\mathcal{L}^{-\alpha }\approx \sum_{j=1}^{k}\gamma _{j}(\eta _{j}I+\mathcal{L%
})^{-1}:=\tau ^{-\alpha }R_{k-1,k}\left( \frac{\mathcal{L}}{\tau }\right) ,
\label{rapp}
\end{equation}%
where the coefficients $\gamma _{j}$ and $\eta _{j}$ are given by%
\begin{equation}
\gamma _{j}=\frac{2\sin (\alpha \pi )\tau ^{1-\alpha }}{\pi }\frac{w_{j}}{%
1+\vartheta _{j}},\qquad \eta _{j}=\frac{\tau (1-\vartheta _{j})}{%
1+\vartheta _{j}};  \label{gamma_eta}
\end{equation}%
here $w_{j}$ and $\vartheta _{j}$ are, respectively, the weights and nodes
of the Gauss-Jacobi quadrature rule.

The choice of $\tau $ in (\ref{jss}) is crucial for the quality of the
approximation attainable by (\ref{rapp}). As already mentioned in the
Introduction, working with bounded operators, it has been shown in \cite{AN}
that asymptotically, that is for $k\rightarrow +\infty $, the optimal choice
is given by%
\begin{equation}
\widetilde{\tau }=\sqrt{\lambda _{\min }(\mathcal{L}_{N})\lambda _{\max }(%
\mathcal{L}_{N})}.  \label{tauopt}
\end{equation}
With this choice and denoting by $\kappa (\mathcal{L}_{N})$ the spectral
condition number of $\mathcal{L}_{N}$ and by $\|
\mathcal{\cdot }\|_{2}$ the induced Euclidean norm, we obtain
\begin{equation}
\left\Vert \mathcal{L}_{N}{}^{-\alpha }-\widetilde{\tau }^{-\alpha
}R_{k-1,k}\left( \frac{\mathcal{L}_{N}}{\widetilde{\tau }}\right)
\right\Vert _{2}\leq C\left( \frac{\sqrt[4]{\kappa (\mathcal{L}_{N})}-1}{%
\sqrt[4]{\kappa (\mathcal{L}_{N})}+1}\right) ^{2k},  \label{erb2}
\end{equation}%
with $C$ independent of $k,$ which is a sharper version of (\ref{erb1}). We remark that $\widetilde{\tau }$ is independent of $k.$ In what follows we shall
follow a different strategy allowing a dependence on $k$ (in any case the
coefficients $\gamma _{j}$ and $\eta _{j}$ completely change with $k$) but
at the same time a `mesh-independence', since we work with the unbounded
operator $\mathcal{L}.$


\section{Error analysis} 

\label{sec3}

Working with the ratio $\lambda /\tau ,$ where $\lambda \in \lbrack
c,+\infty )$ and $\tau >0,$ as shown in \cite[Lemma 4.4]{fgs} the $k$-point
Gauss-Jacobi quadrature given by (\ref{rapp})-(\ref{gamma_eta}) is such that
$R_{k-1,k}\left( {\lambda }/{\tau }\right) $ corresponds to the $(k-1,k)$-Pad\'{e} approximant of $(\lambda /\tau )^{-\alpha }$ centered at $1$.

In this sense, defining
\begin{equation}
z=1-\frac{\lambda }{\tau },  \label{zeta}
\end{equation}%
in what follows we focus the attention on the $(k-1,k)$-Pad\'{e}
approximation
\[
(1-z)^{-\alpha }\approx R_{k-1,k}(1-z).
\]%
Indicating the truncation error by%
\begin{equation}
E_{k-1,k}(1-z):=(1-z)^{-\alpha }-R_{k-1,k}(1-z),  \label{trerr}
\end{equation}%
we have the following result.
\begin{theorem}
\label{MT}For each integer $k\geq 1,$ the exact representation of the truncation error defined  in (\ref{trerr}) is given by
\begin{equation}
E_{k-1,k}(1-z)=\frac{\Gamma (k+1-\alpha )\Gamma (k+1)}{\Gamma (1-\alpha
)\Gamma (2k+1)}\frac{_{2}F_{1}(k+1,k+\alpha ;2k+1;z)}{_{2}F_{1}(-k,k;\alpha
;z^{-1})}\left( -z\right) ^{k},  \label{trunerr}
\end{equation}
in which $\Gamma$ denotes  the gamma function and $_{2}F_{1}$
the hypergeometric function.
\end{theorem}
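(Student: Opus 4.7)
The approach is to exploit the uniqueness of the $(k-1,k)$-Padé approximant of $(1-z)^{-\alpha}$ at $z=0$: it suffices to exhibit polynomials $P_{k-1}(z)$ and $Q_k(z)$ of degrees at most $k-1$ and $k$ respectively, with $Q_k(0)\neq 0$, such that $Q_k(z)(1-z)^{-\alpha}-P_{k-1}(z)=O(z^{2k})$, and then to compute this $O(z^{2k})$ remainder in closed form. The natural candidate for the denominator is dictated by the Gauss--Jacobi interpretation recalled in Section~\ref{sec2}: the nodes $\vartheta_j$ are the zeros of a Jacobi polynomial orthogonal with respect to $\omega(t)$, and the substitution (\ref{jss}) maps them to the poles $\eta_j$ of $R_{k-1,k}$. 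Tracking this substitution leads, after a convenient normalization, to
\[
Q_k(z) \;=\; (-z)^k\,{}_2F_1(-k,k;\alpha;z^{-1}),
\]
which, despite the $z^{-1}$ inside the hypergeometric, is genuinely a polynomial of degree $k$ in $z$ because the series terminates after $k+1$ terms and $(-z)^k$ clears all negative powers.

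Given $Q_k$, the numerator $P_{k-1}$ is forced to be the truncation, at order $k-1$, of the Taylor expansion of $Q_k(z)(1-z)^{-\alpha}$ at $z=0$. Writing
\[
Q_k(z)(1-z)^{-\alpha} \;=\; \sum_{n\ge 0} b_n\, z^n, \qquad (1-z)^{-\alpha}=\sum_{n\ge 0}\frac{(\alpha)_n}{n!}z^n,
\]
the Padé matching $b_n=0$ for $k\le n\le 2k-1$ is equivalent, via (\ref{jss}), to the orthogonality of the relevant Jacobi polynomial against the powers $1,t,\ldots,t^{k-1}$ under $\omega$; this is implicit in the identification of Gauss--Jacobi quadrature with Padé approximation recalled from \cite{fgs}. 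The error representation then collapses to
\[
E_{k-1,k}(1-z) \;=\; \frac{1}{Q_k(z)}\sum_{m\ge 0} b_{2k+m}\,z^{2k+m},
\]
so that the task reduces to evaluating the tail coefficients $b_{2k+m}$ in closed form.

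Reading off the coefficient of $z^{2k+m}$ in the Cauchy product and rearranging with the identity $(a)_{\ell+j}=(a)_\ell(a+\ell)_j$, I expect $b_{2k+m}$ to present itself as a Saalschützian ${}_3F_2(1)$ that sums, via the Pfaff--Saalschütz theorem, to
\[
b_{2k+m} \;=\; \frac{\Gamma(k+1-\alpha)\,\Gamma(k+1)}{\Gamma(1-\alpha)\,\Gamma(2k+1)}\cdot\frac{(k+1)_m(k+\alpha)_m}{(2k+1)_m\, m!}.
\]
Re-summing over $m$ recognises the tail as $\bigl[\Gamma(k+1-\alpha)\Gamma(k+1)/\Gamma(1-\alpha)\Gamma(2k+1)\bigr]\,z^{2k}\,{}_2F_1(k+1,k+\alpha;2k+1;z)$; dividing by $Q_k(z)$ and simplifying $z^{2k}/(-z)^k=(-z)^k$ yields precisely (\ref{trunerr}). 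The principal obstacle is this combinatorial step: pinpointing the correct ${}_3F_2(1)$ summation identity and tracking the sign conventions induced by the $z\mapsto z^{-1}$ swap inside $Q_k$. Once the identity is in hand, matching the Gamma-function prefactor via $\Gamma(k+1-\alpha)/\Gamma(1-\alpha)=(1-\alpha)_k$ and $\Gamma(k+1)/\Gamma(2k+1)=k!/(2k)!$ is routine bookkeeping.
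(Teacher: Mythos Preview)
Your proposal is correct and considerably more detailed than the paper's own proof, which is essentially a two-line citation: the authors note that $(1-z)^{-\alpha}={}_2F_1(1,\alpha;1;z)$ and then simply invoke the truncation-error analysis of Elliot \cite[Sect.~3]{E} for Pad\'e approximants to Gauss hypergeometric functions. Your route instead unpacks that analysis in this specific instance: you write down the denominator $Q_k(z)=(-z)^k\,{}_2F_1(-k,k;\alpha;z^{-1})$ explicitly, and reduce the tail coefficients $b_{2k+m}$ to a terminating, $1$-balanced ${}_3F_2(1)$ that is summed by Pfaff--Saalsch\"utz. What the paper buys is brevity and generality (Elliot's framework covers all ${}_2F_1$ inputs at once); what you buy is self-containment and transparency of the constants. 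Two minor remarks: first, the step you outsource to the quadrature/orthogonality link---the vanishing $b_k=\cdots=b_{2k-1}=0$---falls out of the very same Saalsch\"utz identity you use later, because the corresponding ${}_3F_2(1)$ acquires a factor $(-p)_k=0$ for $0\le p\le k-1$; invoking \cite{fgs} here is therefore unnecessary. Second, your hedging ``I expect'' around the Pfaff--Saalsch\"utz step is unwarranted: the sum you write is manifestly Saalsch\"utzian with a negative-integer numerator parameter, so the identity applies directly and yields exactly the prefactor $\Gamma(k+1-\alpha)\Gamma(k+1)/[\Gamma(1-\alpha)\Gamma(2k+1)]$ you claim.
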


\begin{proof}
Since \cite[Eq. (5.2)]{Ba}
\[
(1-z)^{-\alpha }={_{2}F_{1}}(1,\alpha ;1;z),
\]%
the expression for the truncation error is obtained following the analysis given in \cite[Sect. 3]{E}. 
\end{proof} 

\begin{proposition}
For $z<1$, let $v=1-2z^{-1}$ and $\xi $ be defined by
\begin{equation}
v\pm \left( v^{2}-1\right) ^{1/2}=e^{\pm \xi }.  \label{vcs}
\end{equation}%
Then, for large values of $k$ we have
\begin{equation}
E_{k-1,k}(1-z)=4\sin (\alpha \pi )\frac{v-1}{e^{(2k+1)\xi }}\frac{\left(
1+e^{-\xi }\right) ^{-2\alpha }}{\left( 1-e^{-\xi }\right) ^{2(1-\alpha )}}%
\left( 1+\mathcal{O}\left( \frac{1}{k}\right) \right) .  \label{res}
\end{equation}
\end{proposition}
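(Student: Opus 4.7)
The plan is to extract the large-$k$ asymptotic of each $k$-dependent factor in the exact representation (\ref{trunerr}) and combine them. For the Gamma ratio, Legendre's duplication formula $\Gamma(2k+1) = 4^{k}\Gamma(k+1)\Gamma(k+1/2)/\sqrt{\pi}$ together with Stirling's approximation gives
$$\frac{\Gamma(k+1-\alpha)\Gamma(k+1)}{\Gamma(1-\alpha)\Gamma(2k+1)} = \frac{\sqrt{\pi}\,k^{1/2-\alpha}}{\Gamma(1-\alpha)\,4^{k}}\bigl(1+\mathcal{O}(k^{-1})\bigr).$$

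For the numerator $_{2}F_{1}(k+1,k+\alpha;2k+1;z)$, I would use Euler's integral representation and apply Laplace's method with phase $g(t) = \log t + \log(1-t) - \log(1-zt)$. The stationarity condition $1-2t+zt^{2}=0$ selects the admissible root $t^{*}=(1-\sqrt{1-z})/z\in(0,1)$, at which $e^{kg(t^{*})} = (1+\sqrt{1-z})^{-2k}$. Using the prescribed substitutions $v=1-2/z$ and $e^{\pm\xi}=v\pm\sqrt{v^{2}-1}$ one computes $\sqrt{1-z} = (1+e^{-\xi})/(1-e^{-\xi})$, so the exponentially dominant contribution becomes $4^{-k}(1-e^{-\xi})^{2k}$. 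The Gaussian correction $\sqrt{2\pi/(k|g''(t^{*})|)}$ (with $|g''(t^{*})| = 8/(1-e^{-2\xi})$) together with the non-exponential weight evaluated at $t^{*}$ produces explicit subdominant $\xi$-dependent factors.

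For the denominator $_{2}F_{1}(-k,k;\alpha;z^{-1})$, I would identify the polynomial with a normalized Jacobi polynomial
$$_{2}F_{1}(-k,k;\alpha;z^{-1}) = \frac{k!}{(\alpha)_{k}}\,P_{k}^{(\alpha-1,-\alpha)}(v),$$
and apply Szeg\H{o}'s classical asymptotic formula for Jacobi polynomials off $[-1,1]$. Since the Jacobi exponents satisfy $(\alpha-1)+(-\alpha)+1=0$, the formula reduces cleanly to a leading factor of order $e^{k\xi}/\sqrt{k}$ multiplied by an explicit $\xi$-dependent prefactor, while $k!/(\alpha)_{k}\sim\Gamma(\alpha)\,k^{1-\alpha}$ supplies the Gamma factor that, together with the $\Gamma(1-\alpha)$ left over from the first step, produces $\sin(\alpha\pi)$ via the reflection identity $\Gamma(\alpha)\Gamma(1-\alpha)=\pi/\sin(\alpha\pi)$.

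Finally, I would multiply everything together with $(-z)^{k} = 4^{k}e^{-k\xi}(1-e^{-\xi})^{-2k}$. By design, all powers of $4^{k}$ and algebraic powers of $k$ cancel; the exponentials collapse into $e^{-(2k+1)\xi}$; the $(1\pm e^{-\xi})$ factors combine into $(1+e^{-\xi})^{-2\alpha}(1-e^{-\xi})^{-2(1-\alpha)}$; and the overall prefactor $(v-1)$ is recovered via the identity $v-1=(1-e^{-\xi})^{2}e^{\xi}/2$. The main obstacle will be the careful bookkeeping required to verify these cancellations---in particular, matching the several subdominant $(1\pm e^{-\xi})^{\pm(\alpha\pm 1/2)}$ powers coming from the Laplace saddle-point prefactor and from Szeg\H{o}'s formula so that they combine into precisely the exponents appearing in (\ref{res}).
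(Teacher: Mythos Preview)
Your approach is correct, but it differs from the paper's in a meaningful way. The paper's proof is essentially a one-liner: it quotes two ready-made large-parameter asymptotic formulas for ${}_2F_1(k+1,k+\alpha;2k+1;z)$ and ${}_2F_1(-k,k;\alpha;z^{-1})$ from Erd\'elyi et al.\ (\emph{Higher Transcendental Functions}, Vol.~1, p.~77, Eqs.~(16)--(17)), substitutes them directly into the exact representation~(\ref{trunerr}), and simplifies using $\Gamma(1/2)=\sqrt{\pi}$ and the reflection formula. No saddle-point analysis or Jacobi-polynomial identification is performed; those are hidden inside the cited reference.

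What you do instead is rederive those very asymptotics from first principles: Stirling/duplication for the Gamma prefactor, Laplace's method on Euler's integral for the numerator, and the Jacobi identification ${}_2F_1(-k,k;\alpha;z^{-1})=\frac{k!}{(\alpha)_k}P_k^{(\alpha-1,-\alpha)}(v)$ followed by Szeg\H{o}'s off-cut asymptotic for the denominator. I checked your key intermediate claims---the saddle $t^\ast=1/(1+\sqrt{1-z})$, the values $e^{kg(t^\ast)}=4^{-k}(1-e^{-\xi})^{2k}$, $|g''(t^\ast)|=8/(1-e^{-2\xi})$, and the identities $\sqrt{1-z}=(1+e^{-\xi})/(1-e^{-\xi})$, $(-z)^k=4^k e^{-k\xi}(1-e^{-\xi})^{-2k}$, $v-1=\tfrac12 e^{\xi}(1-e^{-\xi})^2$---and they are all correct. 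The cancellation of the $4^k$ factors and of the algebraic powers of $k$ will indeed go through once the Laplace prefactor and the Szeg\H{o} prefactor are written out explicitly. The trade-off is clear: the paper's route is dramatically shorter but relies on a somewhat obscure citation, while yours is self-contained and makes the mechanism transparent, at the cost of the bookkeeping you flag. Either is acceptable; if you want the argument to stand on its own, your route is preferable, but be aware that the paper dispatches the whole thing in three lines by citing Erd\'elyi.
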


\begin{proof}
Since $z=2/(1-v),$ using \cite[Eqs. (16) and (17) p. 77]{Erd} we have that%
\begin{eqnarray*}
&&_{2}F_{1}(k+1,k+\alpha ;2k+1;z)=4\frac{\Gamma (2k+1)\Gamma (1/2)}{\Gamma
(k+\alpha )\Gamma (k+1-\alpha )}\frac{k^{-1/2}}{(-z)^{k+1}} \\
&&\times e^{-(k+1)\xi }(1-e^{-\xi })^{-3/2+\alpha }(1+e^{-\xi
})^{-1/2-\alpha }\left( 1+\mathcal{O}\left( 1/k\right) \right) ,
\end{eqnarray*}%
\begin{eqnarray*}
&&_{2}F_{1}(-k,k;\alpha ;z^{-1})=\frac{\Gamma (k+1)\Gamma (\alpha )}{2\Gamma
(1/2)\Gamma (k+\alpha )}k^{-1/2} \\
&&\times (1-e^{-\xi })^{1/2-\alpha }(1+e^{-\xi })^{\alpha -1/2}\left(
e^{k\xi }+e^{\pm i\pi \alpha ^{-1/2}}e^{-k\xi }\right) \left( 1+\mathcal{O}%
\left( 1/k\right) \right).
\end{eqnarray*}

Plugging these relations in (\ref{trunerr}) and using the identities $\Gamma
(1/2)=\sqrt{\pi }$ and $\Gamma (\alpha )\Gamma (1-\alpha )=\pi /\sin (\pi
\alpha ),$ we find the result.  
\end{proof}

\begin{proposition}
For large values of $k,$ the following representation for the truncation
error holds%
\begin{equation}
E_{k-1,k}\left( \frac{\lambda }{\tau }\right) =2\sin (\alpha \pi)\left(
\frac{\lambda }{\tau }\right) ^{-\alpha }\left[ \frac{\lambda ^{1/2}-\tau
^{1/2}}{\lambda ^{1/2}+\tau ^{1/2}}\right] ^{2k}\left( 1+\mathcal{O}\left(
\frac{1}{k}\right) \right) .  \label{res2}
\end{equation}
\end{proposition}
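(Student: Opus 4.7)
The plan is to substitute the change of variable $z=1-\lambda/\tau$ into the asymptotic formula (\ref{res}) proved in the previous proposition, and then to carry out the resulting simplifications. It is convenient to introduce the auxiliary variable $u=\sqrt{\lambda/\tau}$, so that $z=1-u^{2}$ and $\sqrt{1-z}=u$; with this choice each of the ingredients $v$, $e^{\pm\xi}$ and $1\pm e^{-\xi}$ appearing in (\ref{res}) becomes a rational function of $u$ in which visible cancellations occur.

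First, I would compute $v=1-2/z=-(1+u^{2})/(1-u^{2})$ and $v^{2}-1=4u^{2}/(1-u^{2})^{2}$. Solving the defining equation $X^{2}-2vX+1=0$ whose roots are $e^{\pm\xi}$, and factoring the numerators as $-(1\mp u)^{2}$, I obtain the two roots $-(1-u)/(1+u)$ and $-(1+u)/(1-u)$. Since the denominator $e^{(2k+1)\xi}$ in (\ref{res}) must grow exponentially to produce a decaying error, I select $e^{\xi}=-(1+u)/(1-u)$ and $e^{-\xi}=-(1-u)/(1+u)$, which is the correct branch in the regime $0<u<1$, i.e.\ $0<\lambda<\tau$.

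Second, I would substitute these identities into (\ref{res}) factor by factor. A direct computation gives $1+e^{-\xi}=2u/(1+u)$ and $1-e^{-\xi}=2/(1+u)$, whence $(1+e^{-\xi})^{-2\alpha}/(1-e^{-\xi})^{2(1-\alpha)}=(1+u)^{2}/(4u^{2\alpha})$. Similarly, $(v-1)/e^{(2k+1)\xi}$ simplifies to $2(1-u)^{2k}/(1+u)^{2k+2}$, the two minus signs coming from $v-1<0$ and from the odd power $2k+1$ of the negative base $e^{\xi}$ cancelling one another. Multiplying these pieces by the prefactor $4\sin(\alpha\pi)$, the powers of $1+u$ collapse and I arrive at $E_{k-1,k}(1-z)=2\sin(\alpha\pi)\,u^{-2\alpha}\bigl((1-u)/(1+u)\bigr)^{2k}\bigl(1+\mathcal{O}(1/k)\bigr)$.

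Finally, reverting $u=\sqrt{\lambda/\tau}$ gives $u^{-2\alpha}=(\lambda/\tau)^{-\alpha}$ and $(1-u)/(1+u)=(\sqrt{\tau}-\sqrt{\lambda})/(\sqrt{\tau}+\sqrt{\lambda})$; since this expression is raised to the even power $2k$, its overall sign is irrelevant and the value coincides with $\bigl((\sqrt{\lambda}-\sqrt{\tau})/(\sqrt{\lambda}+\sqrt{\tau})\bigr)^{2k}$, yielding exactly (\ref{res2}). I expect the main obstacle to be the bookkeeping of signs and branches of $\sqrt{v^{2}-1}$: in the regime $0<\lambda<\tau$ one has $v<-1$ and both $e^{\pm\xi}$ are negative reals, while for $\lambda>\tau$ one has $z<0$, $v>1$, and both $e^{\pm\xi}$ become positive reals; the factor $v-1$ and the odd exponent $2k+1$ must be tracked carefully so that all minus signs cancel, after which the same closed-form expression is obtained by analytic continuation in both regimes and the rest of the argument reduces to elementary algebra.
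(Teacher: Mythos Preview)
Your proof is correct and follows essentially the same route as the paper: both start from the asymptotic formula (\ref{res}) and reduce it by algebraic substitution of the change of variable $z=1-\lambda/\tau$. The only cosmetic difference is that the paper first rewrites (\ref{res}) as $\left(\frac{v+1}{v-1}\right)^{-\alpha}\bigl[v+(v^{2}-1)^{1/2}\bigr]^{-2k}$ in terms of $v$ and then sets $v=(\lambda+\tau)/(\lambda-\tau)$, whereas you introduce $u=\sqrt{\lambda/\tau}$ from the outset; the computations are equivalent and your sign/branch bookkeeping is handled correctly.
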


\begin{proof}
Using (\ref{vcs}), after some algebra we obtain%
\begin{equation}
2\frac{v-1}{e^{(2k+1)\xi }}\frac{\left( 1+e^{-\xi }\right) ^{-2\alpha }}{%
\left( 1-e^{-\xi }\right) ^{2(1-\alpha )}}=\left( \frac{v+1}{v-1}\right)
^{-\alpha }\frac{1}{\left[ v+\left( v^{2}-1\right) ^{1/2}\right] ^{2k}}.
\label{tmp}
\end{equation}%
Since $z={2}/{(1-v)}$ and $z=1-{\lambda }/{\tau }$ we find%
\[
v=\frac{\lambda +\tau }{\lambda -\tau }.
\]%
Substituting this expression in (\ref{tmp}) and then the result in (\ref{res}%
), we easily obtain the statement. 
\end{proof}

By (\ref{rapp}), (\ref{zeta}) and (\ref{trerr}) we have%
\begin{equation}
\left\Vert \mathcal{L}^{-\alpha }-\tau ^{-\alpha }R_{k-1,k}\left( \frac{%
\mathcal{L}}{\tau }\right) \right\Vert \leq \max_{\lambda \geq c}\,\tau
^{-\alpha }\left\vert E_{k-1,k}\left( \frac{\lambda }{\tau }\right)
\right\vert .  \label{errore}
\end{equation}%
As consequence, a suitable value for $\tau $ can be found by working with (%
\ref{res2}). To this purpose, let us consider the function%
\begin{equation}
f(\lambda ,\tau ):=\left( \frac{\lambda }{\tau }\right) ^{-\alpha }\left[
\frac{\lambda ^{1/2}-\tau ^{1/2}}{\lambda ^{1/2}+\tau ^{1/2}}\right] ^{2k},
\label{fl}
\end{equation}%
which is the $\tau $-dependent factor of (\ref{res2}). We want to solve
\begin{equation}
\min_{\tau >0}\max_{\lambda \geq c}\tau ^{-\alpha }f(\lambda ,\tau ).
\label{minmax}
\end{equation}%
For any fixed $\tau >0$, $f(\lambda ,\tau )\rightarrow +\infty $ for $%
\lambda \rightarrow 0^{+}$, $f(\lambda ,\tau )\rightarrow 0$ for $\lambda
\rightarrow +\infty $, $f(\lambda ,\tau )=0$ for $\lambda =\tau $ (the
minimum) and, by solving $\frac{\partial f(\lambda ,\tau )}{\partial \lambda
}=0$, we find a maximum at
\begin{equation}
\overline{\lambda }=\frac{\left( k+\sqrt{k^{2}+1}\right) ^{2}}{\alpha ^{2}}%
\tau =s_{k}^{2}\frac{4k^{2}}{\alpha ^{2}}\tau ,  \label{lams}
\end{equation}%
where%
\begin{equation}
1<s_{k}^{2}=1+\frac{1}{2k^{2}}+\mathcal{O}(1/k^{4}).  \label{sk}
\end{equation}%
Clearly $\overline{\lambda }>\tau $ and hence%
\[
\max_{\lambda \geq c}\tau ^{-\alpha }f(\lambda ,\tau )=\max \left\{ \tau
^{-\alpha }f(c,\tau ),\tau ^{-\alpha }f(\overline{\lambda },\tau )\right\} .
\]%
Setting%
\begin{equation}
\varphi _{1}\left( \tau \right) :=\tau ^{-\alpha }f(c,\tau ),\quad \varphi
_{2}\left( \tau \right) :=\tau ^{-\alpha }f(\overline{\lambda },\tau ),
\label{ph}
\end{equation}%
by (\ref{fl}) we find%
\begin{eqnarray}
\varphi _{1}\left( \tau \right) &=&c^{-\alpha }\left[ \frac{c^{1/2}-\tau^{1/2}}{c^{1/2}+\tau ^{1/2}}\right] ^{2k},  \nonumber \\
\varphi _{2}\left( \tau \right) &=&\tau ^{-\alpha }f\left( s_{k}^{2}\frac{4k^{2}}{\alpha ^{2}}\tau ,\tau \right)  \nonumber \\
&=&\tau^{-\alpha }\left( s_{k}^{2}\frac{4k^{2}}{\alpha ^{2}}\right)^{-\alpha }\left( \frac{2ks_{k}-\alpha }{2ks_{k}+\alpha }\right) ^{2k}
\nonumber \\
&=&\tau^{-\alpha }\left( \frac{4k^{2}e^{2}}{\alpha ^{2}}\right) ^{-\alpha
}(1+\mathcal{O}(1/k^{2})),  \label{phik}
\end{eqnarray}%
where the last equality follows from (\ref{sk}) and by considering the
Taylor expansion around $y=0$ after setting $s_{k}^{2}=1+y.$ Since $\varphi
_{2}\left( \tau \right) $ is monotone decreasing, whereas $\varphi
_{1}\left( \tau \right) $ is monotone increasing for $\tau >c$, the solution
of (\ref{minmax}) is obtained by solving%
\begin{equation}
\varphi_{1}\left( \tau \right) =\varphi_{2} \left( \tau \right) \mbox{ for }\tau >c.  \label{pbt}
\end{equation}

\begin{proposition}
Let $\tau ^{\ast }$ be the solution of (\ref{pbt}). Then%
\begin{equation}
\tau ^{\ast }\approx \tau _{k}:=c\left( \frac{\alpha }{2ke}\right) ^{2}\exp
\left( 2W\left( \frac{4k^{2}e}{\alpha ^{2}}\right) \right) ,  \label{tauk}
\end{equation}%
where $W$ denotes the Lambert-W function.
\end{proposition}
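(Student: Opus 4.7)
The plan is to turn the transcendental equation $\varphi_{1}(\tau)=\varphi_{2}(\tau)$ into an explicit equation of the form $ze^{z}=A$, which is precisely what the Lambert $W$ function inverts. This requires some asymptotic simplification of $\varphi_{1}$ on the regime $\tau>c$ that is relevant for the fixed point.

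First I would replace $\varphi_{1}(\tau)=c^{-\alpha}\bigl[(c^{1/2}-\tau^{1/2})/(c^{1/2}+\tau^{1/2})\bigr]^{2k}$ by a more tractable asymptotic form. For $\tau>c$ one has $\bigl|(c^{1/2}-\tau^{1/2})/(c^{1/2}+\tau^{1/2})\bigr|=(1-u)/(1+u)$ with $u=\sqrt{c/\tau}$, and since $\log((1-u)/(1+u))=-2u+\mathcal{O}(u^{3})$, the even power $2k$ gives
\[
\varphi_{1}(\tau)=c^{-\alpha}\exp\!\bigl(-4k\sqrt{c/\tau}\,\bigr)\bigl(1+\mathcal{O}(k(c/\tau)^{3/2})\bigr).
\]
The relevant fixed point turns out to satisfy $\tau/c\to\infty$ as $k\to\infty$, so the error term is harmless; this is the point that will need the most care, because one has to verify a posteriori that $\tau_{k}/c$ indeed grows fast enough in $k$ to absorb the neglected cubic terms.

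Next, after taking logarithms in $\varphi_{1}(\tau)=\varphi_{2}(\tau)$ and using \eqref{phik}, the equation becomes
\[
-\alpha\log c-4k\sqrt{c/\tau}=-\alpha\log\tau-\alpha\log\!\Bigl(\tfrac{4k^{2}e^{2}}{\alpha^{2}}\Bigr),
\]
up to the $\mathcal{O}(1/k^{2})$ correction already absorbed in \eqref{phik}. Introducing $y=\tfrac12\log(\tau/c)$ and dividing by $\alpha$ turns this into
\[
y+\log\!\Bigl(\tfrac{2ke}{\alpha}\Bigr)=\tfrac{2k}{\alpha}\,e^{-y}.
\]
Setting $z:=y+\log(2ke/\alpha)$ so that $e^{-y}=(2ke/\alpha)e^{-z}$, the equation collapses to
\[
z\,e^{z}=\frac{4k^{2}e}{\alpha^{2}},
\]
which by definition of the principal Lambert $W$ branch yields $z=W(4k^{2}e/\alpha^{2})$.

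Finally I would undo the substitutions: from $y=z-\log(2ke/\alpha)$ and $\tau=c\,e^{2y}$ I get
\[
\tau_{k}=c\exp\!\bigl(2W(4k^{2}e/\alpha^{2})\bigr)\,\Bigl(\tfrac{\alpha}{2ke}\Bigr)^{2},
\]
which is exactly \eqref{tauk}. The main obstacle, as indicated above, is not the algebra but checking the self-consistency of the asymptotic simplification of $\varphi_{1}$: one must confirm that the Lambert $W$ expression forces $\tau_{k}/c\to\infty$ (indeed $W(4k^{2}e/\alpha^{2})\sim\log(k^{2})$, so $\tau_{k}\sim c\,(\log k)^{2}\cdot$polynomial corrections grow with $k$), which justifies the use of the $-4k\sqrt{c/\tau}$ exponential approximation of $\varphi_{1}$ and guarantees that $\tau^{\ast}\approx\tau_{k}$ with vanishing relative error as $k\to\infty$.
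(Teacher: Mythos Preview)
Your proof is correct and follows essentially the same route as the paper: both replace $(1-x)/(1+x)$ by $e^{-2x}$ with $x=\sqrt{c/\tau}$ and then reduce the resulting transcendental equation to the Lambert-$W$ form; your variables $y,z$ are just $-\log x$ and $\log(1/(a_{k}x))$ in the paper's notation, so the two derivations are algebraically identical. One small slip in your closing remark: the asymptotics of $W$ give $\tau_{k}/c\sim 4k^{2}/(\alpha^{2}[\ln(4k^{2}e/\alpha^{2})]^{2})$, not $c(\log k)^{2}$, though your conclusion that $\tau_{k}/c\to\infty$ (justifying the approximation) is unaffected.
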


\begin{proof}
Neglecting the factor $(1+\mathcal{O}(1/k^{2}))$ in (\ref{phik}), equation (%
\ref{pbt}) implies%
\begin{equation}
\left( \frac{c}{\tau }\right) ^{-\alpha }\left[ \frac{\tau ^{1/2}-c^{1/2}}{%
\tau ^{1/2}+c^{1/2}}\right] ^{2k}=\left( \frac{4k^{2}e^{2}}{\alpha ^{2}}%
\right) ^{-\alpha }.  \label{pr}
\end{equation}%
Setting $x=\left( {c}/{\tau }\right) ^{1/2}<1$ and%
\begin{equation}
a_{k}=\frac{\alpha }{2ke},  \label{ak}
\end{equation}%
by (\ref{pr}) we obtain%
\[
x^{-\frac{\alpha }{k}}\left( \frac{1-x}{1+x}\right) =a_{k}^{\frac{\alpha }{k}%
}.
\]%
Using the approximation%
\begin{equation}
\frac{1-x}{1+x}\approx e^{-2x},  \label{apprexp}
\end{equation}%
we solve%
\[
e^{-2x}=(a_{k}x)^{\frac{\alpha }{k}}.
\]%
Therefore
\[
-2x=\frac{\alpha }{k}\ln \left( a_{k}x\right)
\]%
which implies
\[
\frac{2k}{a_{k}\alpha }=\frac{1}{a_{k}x}\ln \left( \frac{1}{a_{k}x}\right) .
\]%
Using the Lambert-W function, the solution for such equation is given by
\[
\frac{1}{a_{k}x}=\exp \left( W\left( \frac{2k}{a_{k}\alpha }\right) \right) .
\]%
Substituting $x$ by $\left( {c}/{\tau }\right) ^{1/2}$ and using (\ref{ak})
we obtain the expression of $\tau _{k}.$ 
\end{proof}

We remark that since for large $z$
\[
W\left( z\right) =\ln z-\ln \left( \ln z\right) +\mathcal{O}(1),
\]%
we have (see (\ref{tauk}))
\begin{equation}
\tau _{k}=c\frac{4k^{2}}{\alpha ^{2}}\left[ \ln \left( \frac{4k^{2}}{\alpha
^{2}}e\right) \right] ^{-2}(1+\mathcal{O}(1/k^{2})).  \label{tk}
\end{equation}

By (\ref{phik}) we thus obtain%
\begin{eqnarray*}
\varphi _{2}\left( \tau _{k}\right) &=&\tau _{k}^{-\alpha }f(\overline{%
\lambda },\tau _{k}) \\
&=&c^{-\alpha }\left( \frac{2ke^{1/2}}{\alpha }\right) ^{-4\alpha }\left[
2\ln \left( \frac{2k}{\alpha }\right) +1\right] ^{2\alpha }(1+\mathcal{O}%
(1/k^{2})).
\end{eqnarray*}%
The above analysis yields the following result.

\begin{theorem}
\label{MT1}Let $\tau _{k}$ be defined according to (\ref{tauk}). Taking $\tau =\tau _{k}$ in (%
\ref{jss}) we have%
\begin{eqnarray}
\left\Vert \mathcal{L}^{-\alpha }-\tau _{k}^{-\alpha }R_{k-1,k}\left( 
\frac{\mathcal{L}}{\tau _{k}}\right) \right\Vert &\leq &2\sin (\alpha \pi)\,c^{-\alpha }\left( 
\frac{2ke^{1/2}}{\alpha }\right) ^{-4\alpha }  \nonumber
\\
&&\times \left[ 2\ln \left( \frac{2k}{\alpha }\right) +1\right] ^{2\alpha
}(1+\mathcal{O}(1/k^{2})).  \label{th1}
\end{eqnarray}
\end{theorem}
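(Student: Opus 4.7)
The plan is to chain together the pointwise bound (\ref{errore}), the asymptotic error formula (\ref{res2}), and the min--max analysis that motivated the choice $\tau_{k}$. First, I would apply (\ref{errore}) to reduce the operator-norm estimate to $\max_{\lambda\geq c}\tau^{-\alpha}|E_{k-1,k}(\lambda/\tau)|$, and then insert (\ref{res2}) to obtain, for large $k$,
\[
\max_{\lambda\geq c}\tau^{-\alpha}|E_{k-1,k}(\lambda/\tau)|=2\sin(\alpha\pi)\,\max_{\lambda\geq c}\tau^{-\alpha}f(\lambda,\tau)\,(1+\mathcal{O}(1/k)),
\]
with $f$ as in (\ref{fl}).

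Next I would exploit the shape information already established for $f$: since $f(\lambda,\tau)\to+\infty$ as $\lambda\to 0^{+}$, $f(\tau,\tau)=0$, and $f(\cdot,\tau)$ has a single interior critical point $\overline{\lambda}$ given by (\ref{lams}), for every $\tau>c$ the maximum over $\lambda\geq c$ equals $\max\{\varphi_{1}(\tau),\varphi_{2}(\tau)\}$, with $\varphi_{1},\varphi_{2}$ as in (\ref{ph}). Since $\varphi_{1}$ is increasing and $\varphi_{2}$ decreasing in $\tau>c$, and $\tau_{k}$ is by construction the (approximate) solution of $\varphi_{1}=\varphi_{2}$, evaluating at $\tau=\tau_{k}$ the maximum reduces to $\varphi_{2}(\tau_{k})$ up to the higher-order correction introduced by the approximation (\ref{apprexp}).

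Then I would substitute the asymptotic form (\ref{tk}) of $\tau_{k}$ into the closed form (\ref{phik}) of $\varphi_{2}$. From (\ref{tk}) one reads off $\tau_{k}^{-\alpha}=c^{-\alpha}(2k/\alpha)^{-2\alpha}[2\ln(2k/\alpha)+1]^{2\alpha}(1+\mathcal{O}(1/k^{2}))$, while $(4k^{2}e^{2}/\alpha^{2})^{-\alpha}=(2ke/\alpha)^{-2\alpha}$; multiplying and collecting the factor $e^{-2\alpha}$ into $(2ke^{1/2}/\alpha)^{-4\alpha}$ produces
\[
\varphi_{2}(\tau_{k})=c^{-\alpha}\left(\frac{2ke^{1/2}}{\alpha}\right)^{-4\alpha}\left[2\ln\left(\frac{2k}{\alpha}\right)+1\right]^{2\alpha}(1+\mathcal{O}(1/k^{2})),
\]
and multiplication by $2\sin(\alpha\pi)$ yields (\ref{th1}).

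The main delicacy, and essentially the only step requiring genuine care, is the bookkeeping of the successive remainder terms: the factor $(1+\mathcal{O}(1/k))$ from (\ref{res2}), the correction implicit in the passage from $\tau^{\ast}$ to $\tau_{k}$ (which rests on (\ref{apprexp}) together with the expansion $W(z)=\ln z-\ln\ln z+\mathcal{O}(1)$ behind (\ref{tk})), and the $(1+\mathcal{O}(1/k^{2}))$ in (\ref{phik}). One must verify that their composition is compatible with the corrective factor $(1+\mathcal{O}(1/k^{2}))$ stated in (\ref{th1}); once this asymptotic accounting is settled, the rest of the argument is a direct substitution into the already-derived expressions.
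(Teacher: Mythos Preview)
Your proposal is correct and follows essentially the same route as the paper: the paper's proof is the single sentence ``The statement immediately follows from (\ref{res2}), (\ref{errore}), and the analysis just made,'' which is precisely the chain you spell out. Your observation about the remainder-term bookkeeping (in particular that (\ref{res2}) carries only a $1+\mathcal{O}(1/k)$ factor while (\ref{th1}) states $1+\mathcal{O}(1/k^{2})$) is a fair point that the paper does not explicitly address.
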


\begin{proof}
The statement immediately follows from (\ref{res2}), (\ref{errore}), and the
analysis just made. 
\end{proof}

\begin{remark}
The factor $c^{-\alpha }$ in the bound (\ref{th1}) reveals how the problem
becomes increasingly difficult if the spectrum is close to the branch point
of $\lambda^{-\alpha }.$ 
\end{remark}


\section{The case of bounded operators}

\label{sec4}

The theory just developed can be easily adapted to the case of bounded
operators $\mathcal{L}_{N}$ with spectrum contained in $[c,\lambda _{N}]$,
where $\lambda _{N}=\lambda _{\max }(\mathcal{L}_{N})$. In this situation we
want to solve%
\begin{equation}
\min_{\tau >0}\max_{c\leq \lambda \leq \lambda _{N}}\tau ^{-\alpha
}f(\lambda ,\tau ).  \label{minmax2}
\end{equation}%
Looking at (\ref{lams}) we have $\overline{\lambda }=\overline{\lambda }%
(k)\rightarrow +\infty $ as $k\rightarrow +\infty $. As a consequence for, $%
\overline{\lambda }\leq \lambda _{N}$ ($k$ small) the solution of (\ref%
{minmax2}) remains the one approximated by (\ref{tauk}) and the bound (\ref%
{th1}) is still valid. On the contrary, for $\overline{\lambda }>\lambda
_{N} $ ($k$ large), the bound can be improved as follows.

Remembering the features of the function $f(\lambda ,\tau )$ introduced in (%
\ref{fl}), we have that for $\overline{\lambda }>\lambda _{N}$ the solution
of (\ref{minmax2}) is obtained by solving%
\begin{equation}
\varphi_{1}\left( \tau \right) =\varphi_{3}\left( \tau \right) \mbox{ for } \tau >c,  \label{pbt2}
\end{equation}%
where $\varphi_{1}\left( \tau \right)$ is defined in (\ref{ph}) and
\[
\varphi_{3}\left( \tau \right) :=\tau ^{-\alpha }f(\lambda_{N},\tau )
=\lambda_{N}^{-\alpha }\left[ \frac{\lambda_{N}^{1/2}-\tau ^{1/2}}{\lambda_{N}^{1/2}+\tau ^{1/2}}\right] ^{2k}.
\]
It can be easily verified that the equation $\varphi _{1}\left( \tau \right)
=\varphi _{3}\left( \tau \right) $ has in fact two solutions, one in the
interval $(0,c)$ and the other in $(c,\lambda _{N})$. Anyway since $\varphi
_{3}\left( \tau \right) $ is monotone decreasing in $[0,\lambda _{N})$ we
have to look for the one in $(c,\lambda _{N})$ as stated in (\ref{pbt2}).

\begin{proposition}
Let ${\hat \tau}^{\ast }$ be the solution of (\ref{pbt2}). Then%
\begin{equation}
{\hat \tau}^{\ast } \approx \tau _{k}:={\left( -\frac{\alpha \lambda
_{N}^{1/2}}{8k} \ln \left( \frac{\lambda _{N}}{c}\right) +\sqrt{\left(\frac{%
\alpha \lambda _{N}^{1/2}}{8k} \ln \left( \frac{\lambda _{N}}{c}%
\right)\right)^2+\left( c\,\lambda _{N}\right) ^{1/2}}\right) ^{2}}.
\label{tk2}
\end{equation}
\end{proposition}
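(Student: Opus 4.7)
The plan is to mimic, but also suitably modify, the argument already used for Proposition 3.4: take the equation $\varphi_{1}(\tau)=\varphi_{3}(\tau)$, eliminate the $2k$-th power through the same exponential approximation (\ref{apprexp}) used in the unbounded case, and reduce the resulting transcendental equation to one that can be solved in closed form. In contrast to the unbounded case, where the right-hand side $\varphi_{2}(\tau)$ did not depend on $\tau$ through a fractional-linear expression, here both sides have the structure $[(\cdot-\tau^{1/2})/(\cdot+\tau^{1/2})]^{2k}$, and I expect this to produce a quadratic in $\tau^{1/2}$ rather than a Lambert-$W$ expression.

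Concretely, I would first use the explicit forms of $\varphi_{1}$ and $\varphi_{3}$, noting that for $\tau\in(c,\lambda_{N})$ both bracketed factors are positive, so one can take the $2k$-th root of
\[
c^{-\alpha}\left[\frac{\tau^{1/2}-c^{1/2}}{\tau^{1/2}+c^{1/2}}\right]^{2k}
=\lambda_{N}^{-\alpha}\left[\frac{\lambda_{N}^{1/2}-\tau^{1/2}}{\lambda_{N}^{1/2}+\tau^{1/2}}\right]^{2k}.
\]
Next, applying (\ref{apprexp}) to both fractions (with $x=(c/\tau)^{1/2}$ on the left and $x=(\tau/\lambda_{N})^{1/2}$ on the right) and taking logarithms collapses the equation to
\[
\frac{\alpha}{2k}\ln\!\left(\frac{\lambda_{N}}{c}\right)
=2\left(\frac{c}{\tau}\right)^{1/2}-2\left(\frac{\tau}{\lambda_{N}}\right)^{1/2}.
\]
Setting $y=\tau^{1/2}$ and multiplying by $y\lambda_{N}^{1/2}/2$ turns this into the quadratic
\[
y^{2}+\frac{\alpha\lambda_{N}^{1/2}}{4k}\ln\!\left(\frac{\lambda_{N}}{c}\right)y-(c\,\lambda_{N})^{1/2}=0,
\]
whose positive root, squared, yields exactly (\ref{tk2}).

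The step that needs the most care is picking the correct root. The quadratic has two real solutions of opposite sign (since the constant term is negative), and only the positive one can be equal to $\tau^{1/2}$. Moreover I should check that this positive solution does land in the target interval $(c,\lambda_{N})$, so that the earlier remark, namely that among the two solutions of $\varphi_{1}=\varphi_{3}$ we must take the one in $(c,\lambda_{N})$, is respected. As $k\to\infty$ the linear coefficient vanishes and the positive root tends to $(c\lambda_{N})^{1/4}$, giving $\tau_{k}\to\sqrt{c\lambda_{N}}=\widetilde{\tau}$ in (\ref{tauopt}), which is a reassuring consistency check.

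The only genuine approximation is the replacement (\ref{apprexp}), which is accurate precisely in the regime where both $(c/\tau)^{1/2}$ and $(\tau/\lambda_{N})^{1/2}$ are small; for $\tau$ of order $(c\lambda_{N})^{1/2}$ this ratio equals $(c/\lambda_{N})^{1/4}$, so the approximation is justified exactly in the regime of interest (highly refined discretization and $k$ large enough that $\overline{\lambda}>\lambda_{N}$). I would state this explicitly and absorb the residual error in the $\approx$ symbol, as is done in Proposition 3.4.
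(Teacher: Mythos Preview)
Your proposal is correct and follows essentially the same route as the paper: start from $\varphi_{1}(\tau)=\varphi_{3}(\tau)$, apply the approximation (\ref{apprexp}) to both fractional-linear factors, take logarithms to obtain $(c/\tau)^{1/2}-(\tau/\lambda_{N})^{1/2}=\tfrac{\alpha}{4k}\ln(\lambda_{N}/c)$, and clear denominators to reach the quadratic in $\tau^{1/2}$ whose positive root gives (\ref{tk2}). Your additional consistency check $\tau_{k}\to\widetilde{\tau}$ as $k\to\infty$ and the remark on when (\ref{apprexp}) is accurate are welcome extras not present in the paper's version.
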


\begin{proof}
From (\ref{pbt2}) we have
\begin{equation}
c^{-\alpha }\left[ \frac{\tau ^{1/2}-c^{1/2}}{\tau ^{1/2}+c^{1/2}}\right]
^{2k}=\lambda _{N}^{-\alpha }\left[ \frac{\lambda _{N}^{1/2}-\tau ^{1/2}}{%
\lambda _{N}^{1/2}+\tau ^{1/2}}\right] ^{2k}.  \label{pr1}
\end{equation}%
Setting $x=\left( {c}/{\tau }\right) ^{1/2}<1$ and $y=\left( \tau /\lambda
_{N}\right) ^{1/2}<1$ by (\ref{pr1}) we obtain
\[
\left( \frac{1-x}{1+x}\right) =\left( \frac{\lambda _{N}}{c}\right) ^{-\frac{%
\alpha }{2k}}\left( \frac{1-y}{1+y}\right) .
\]%
Using (\ref{apprexp}) we solve%
\[
e^{-2x}=\left( \frac{\lambda _{N}}{c}\right) ^{-\frac{\alpha }{2k}}e^{-2y}.
\]%
Therefore
\[
-2x=-\frac{\alpha }{2k}\ln \left( \frac{\lambda _{N}}{c}\right) -2y
\]%
which implies
\[
x-y=\frac{\alpha }{4k}\ln \left( \frac{\lambda _{N}}{c}\right) .
\]%
Substituting $x$ by $\left( {c}/{\tau }\right) ^{1/2}$ and $y$ by $\left(
\tau /\lambda _{N}\right) ^{1/2}$ after some algebra we obtain
\[
\tau +\frac{\alpha }{4k}\lambda _{N}^{1/2}\ln \left( \frac{\lambda _{N}}{c}%
\right) \tau ^{1/2}-(c\,\lambda _{N})^{1/2}=0.
\]%
Then, solving this equation and taking the positive solution, we obtain the
expression of $\tau _{k}.$   
\end{proof}

Observe that by (\ref{tk2}), for $k\rightarrow +\infty $ we have
\begin{eqnarray*}
\left( \frac{\tau _{k}}{\lambda _{N}}\right) ^{1/2} &=&-\frac{\alpha }{8k}%
\ln \left( \frac{\lambda _{N}}{c}\right) +\sqrt{\left( \frac{\alpha }{8k}\ln
\left( \frac{{\lambda _{N}}}{c}\right) \right) ^{2}+\left( \frac{c}{\lambda_{N}}\right) ^{1/2}} \\
&=&-\frac{\alpha }{8k}\ln \left(
\frac{{\lambda _{N}}}{c}\right) +\left( \frac{c}{\lambda _{N}}\right) ^{1/4}+\mathcal{O}\left( \frac{1}{k^{2}}\right).
\end{eqnarray*}%
Finally, using (\ref{apprexp}) and the above expression we obtain%
\begin{eqnarray}
\varphi_{3}\left( \tau_{k}\right) &=&\lambda_{N}^{-\alpha }\left[ \frac{%
\lambda_{N}^{1/2}-\tau_{k}^{1/2}}{\lambda_{N}^{1/2}+\tau_{k}^{1/2}}%
\right] ^{2k}  \nonumber \\
&\leq &\lambda _{N}^{-\alpha }\exp \left( -4k\left( \frac{\tau_{k}}{\lambda_{N}}\right) ^{1/2}\right)  \nonumber \\
&=&\lambda_{N}^{-\alpha }\exp \left( -4k\left[ \left( \frac{c}{\lambda_{N}}%
\right) ^{1/4}-\frac{\alpha }{8k}\ln \left( \frac{{\lambda_{N}}}{c}\right) %
\right] \right) \left( 1+\mathcal{O}\left( \frac{1}{k}\right) \right)  \nonumber
\\
&=&\lambda _{N}^{-\alpha }\exp \left( -4k\left( \frac{c}{\lambda _{N}}%
\right) ^{1/4}\right) \exp \left( \frac{\alpha }{2}\ln \left( \frac{{\lambda
_{N}}}{c}\right) \right) \left( 1+\mathcal{O}\left( \frac{1}{k}\right)
\right)  \nonumber \\
&=&\left( c\,\lambda _{N}\right) ^{-\alpha /2}\exp \left( -4k\left( \frac{c}{%
\lambda _{N}}\right) ^{1/4}\right) \left( 1+\mathcal{O}\left( \frac{1}{k}%
\right) \right) .  \label{res3}
\end{eqnarray}%
The above analysis yields the following result.

\begin{theorem}
\label{MT2}Let $\overline{k}$ be such that for each $k\geq \overline{k}$ we
have $\overline{\lambda }=\overline{\lambda }(k)>\lambda _{N}.$ Then for
each $k\geq \overline{k}$, taking in (\ref{jss}) $\tau =\tau _{k},$ where $%
\tau _{k}$ is given in (\ref{tk2}), the following bound holds%
\begin{eqnarray}
\left\Vert \mathcal{L}_{N}^{-\alpha }-\tau _{k}^{-\alpha }R_{k-1,k}\left(
\frac{\mathcal{L}_{N}}{\tau _{k}}\right) \right\Vert &\leq&  2\sin (\alpha
\pi )\left( c\,\lambda _{N}\right) ^{-\alpha /2}  \nonumber \\
&&\! \times \exp \left( -4k\left( \frac{c}{\lambda _{N}}\right) ^{1/4}\right) \!
\left( 1+{\mathcal{O}}\left({1}/{k}\right) \right).  \label{th2}
\end{eqnarray}
\end{theorem}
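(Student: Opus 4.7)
The plan is to mirror the proof of Theorem \ref{MT1}, but with the bounded-spectrum version of the operator error estimate \eqref{errore} and with the optimal $\tau$ replaced by the one identified for problem \eqref{minmax2} rather than \eqref{minmax}. All the analytic work has, in fact, already been done in Section~\ref{sec4}; what remains is just to assemble the pieces.

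First, I would observe that when $\sigma(\mathcal{L}_N)\subseteq [c,\lambda_N]$, the spectral-calculus argument behind \eqref{errore} gives
\[
\left\Vert \mathcal{L}_N^{-\alpha}-\tau^{-\alpha}R_{k-1,k}\!\left(\frac{\mathcal{L}_N}{\tau}\right)\right\Vert\;\le\;\max_{c\le\lambda\le\lambda_N}\tau^{-\alpha}\bigl|E_{k-1,k}(\lambda/\tau)\bigr|,
\]
so that the only change with respect to the unbounded case is to take the maximum over the finite interval $[c,\lambda_N]$. Next I would invoke the asymptotic representation \eqref{res2}, which converts the right-hand side into $2\sin(\alpha\pi)\max_{c\le\lambda\le\lambda_N}\tau^{-\alpha}f(\lambda,\tau)(1+\mathcal{O}(1/k))$, with $f$ as in \eqref{fl}.

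Second, I would use the hypothesis $k\ge \overline{k}$, which forces $\overline{\lambda}(k)>\lambda_N$, so that on $[c,\lambda_N]$ the function $\lambda\mapsto f(\lambda,\tau)$ has no interior maximum: the maximum is realized at one of the endpoints. Combined with the shape of $f$ recalled in Section~\ref{sec4}, this reduces the min-max to the system $\varphi_1(\tau)=\varphi_3(\tau)$ on $(c,\lambda_N)$, whose relevant solution is the $\tau_k$ given by \eqref{tk2}. At $\tau=\tau_k$ the two quantities $\varphi_1(\tau_k)$ and $\varphi_3(\tau_k)$ coincide, so the inner maximum equals $\varphi_3(\tau_k)$, and the bound \eqref{res3} then yields
\[
\varphi_3(\tau_k)\le(c\lambda_N)^{-\alpha/2}\exp\!\left(-4k(c/\lambda_N)^{1/4}\right)\bigl(1+\mathcal{O}(1/k)\bigr).
\]
Plugging this into the estimate above produces exactly \eqref{th2}.

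The proof is essentially an assembly step, so I do not expect any serious obstacle. The one point that deserves a careful sentence is the justification that the maximum of $\tau_k^{-\alpha}f(\cdot,\tau_k)$ on $[c,\lambda_N]$ is attained at $\lambda=\lambda_N$ (and equals $\varphi_1(\tau_k)=\varphi_3(\tau_k)$), which is precisely what the assumption $k\ge\overline{k}$ together with the monotonicity of $f(\cdot,\tau_k)$ on $[c,\overline{\lambda}(k)]$ guarantees; the remaining $\mathcal{O}(1/k)$ term in \eqref{th2} absorbs both the Pad\'e asymptotic error from \eqref{res2} and the expansion used in deriving \eqref{res3}.
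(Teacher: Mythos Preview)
Your proposal is correct and follows exactly the approach of the paper: the paper's own ``proof'' is simply the sentence ``The above analysis yields the following result,'' and you have accurately spelled out what that analysis is---the bounded-spectrum analogue of \eqref{errore}, the asymptotic \eqref{res2}, the endpoint reduction via $k\ge\overline{k}$, the choice $\tau=\tau_k$ from \eqref{tk2}, and the bound \eqref{res3} on $\varphi_3(\tau_k)$. Your closing remark about the $\mathcal{O}(1/k)$ term absorbing both the Pad\'e asymptotic and the approximations behind \eqref{res3} (and implicitly the fact that $\tau_k$ is only an approximation to $\hat\tau^\ast$) is exactly the right justification.
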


It is important to remark that, qualitatively, we have obtained the same
result of \cite{AN} and reported in (\ref{erb1}) following a completely
different approach. Nevertheless the analysis here presented is
quantitatively more accurate since it provides more information about the
constant preceeding the exponential factor.

Observe moreover that the analysis of this section may be particularly
useful when, in practical situation, one is forced to keep the
discretization quite coarse (so that $\overline{k}$ may be rather small) and
also to keep small the number of quadrature nodes $k$. In this case,
defining $\tau _{k}$ as in (\ref{tk2}) may provide results much better than
the one attainable with the asymptotically optimal choice $\tilde \tau =%
\sqrt{\lambda _{\min }(\mathcal{L}_{N})\lambda _{\max }(\mathcal{L}_{N})}$.


\section{Numerical experiments}

\label{sec6}

In this section we present the numerical results obtained by considering two simple cases of self-adjoint positive operators.
In particular, in the first example we try to simulate the behavior of an unbounded operator by working with a
diagonal matrix with a wide spectrum. In the second one we consider the
standard central difference discretization of the one dimensional Laplace
operator with Dirichlet boundary conditions.

We remark that in all the experiments the weights and nodes of the Gauss-Jacobi quadrature rule are computed  by
using the Matlab function \texttt{jacpts} implemented in Chebfun by Hale and Townsend \cite{HT}.

 \begin{example} \label{e1}
 We define $A=diag(1, 2, \dots,N)$ and $\mathcal{L}_{N}=A^{p}$ so that $\sigma (\mathcal{L}_{N})\subseteq \lbrack 1,N^{p}]$. Taking $N=100$ and $p=4$,
 in Figure \ref{fig1}, for $\alpha=0.25, 0.5, 0.75$  the error (with respect to the Euclidean norm)
 and the error bound (\ref{th1}) are plotted versus  $k,$ the number of points of the used Gauss-Jacobi quadrature rule.

\begin{figure}
\includegraphics[width=1.00\textwidth]{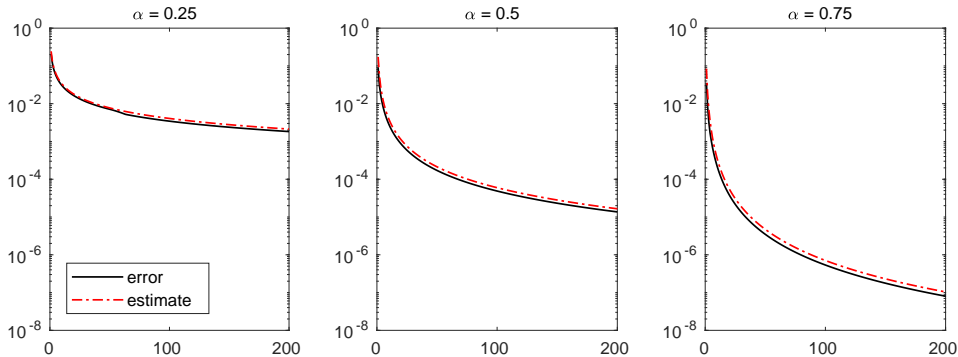}
 \caption{Error and error bound (\ref{th1}) with respect to $k$ for Example 1 with $N=100$, $p=4.$} 
 \label{fig1}
 \end{figure}
 
 In Figure \ref{fig2}, for  $\alpha=0.5$ we plot the error obtained using  $\tau _{k}$ taken as in (\ref{tauk}) and  $\tilde \tau$ as in (\ref{tauopt}), changing the amplitude of the
 spectrum, that is, the value of $p.$ In particular, we fix again $N=100$ and take $p=2,3,4.$
 
 \begin{figure}
\includegraphics[width=1.00\textwidth]{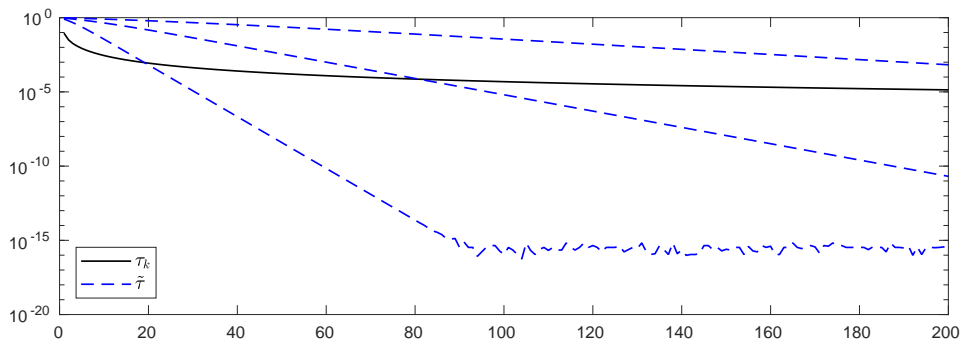}
 \caption{
 Error comparison for Example \ref{e1} using  $\tilde \tau$ as in (\ref{tauopt}) 
and $\tau _{k}$ as in (\ref{tauk}), $p=2,3,4$ (lowest to highest curve),  
$N=100$ and $\alpha =0.5.$}
 \label{fig2}
 \end{figure}
 
 The figure clearly shows the improvement attainable with $\tau _{k}$ for $k$
 small, and moreover the deterioration of the method for very large spectra
 when using $\tilde \tau.$
 \end{example}

\begin{example} \label{e2}
We consider the linear operator $\mathcal{L}u=-u^{\prime \prime},$ $u:[0, b]\rightarrow \mathbb{R},$ 
with Dirichlet boundary conditions $u(0)=u(b)=0$. It is known that $\mathcal{L}$ has a point spectrum consisting
entirely of eigenvalues
\[
\mu_{s}=\frac{\pi ^{2}s^{2}}{b^{2}},\qquad \mbox{for }s=1,2,3,\dots.
\]
Using the standard central difference scheme on a uniform grid and setting $b=1$, in this example we work with the operator
\[
\mathcal{L}_{N}:=(N+1)^{2}tridiag(-1,2,-1)\in \mathbb{R}^{N\times N}.
\]
The eigenvalues are
\[
\lambda_{j}=4(N+1)^{2}\sin ^{2}\left( \frac{j\pi }{2(N+1)}\right) ,\qquad
j=1,2,\dots ,N,
\]
so that $\sigma (\mathcal{L}_{N})\subseteq \lbrack \pi^2,4(N+1)^{2}].$  

The aim of this example is to show the improvement that can be obtained by
using the $k$-dependent parameter $\tau _{k}$ as in (\ref{tk2}) with respect
to the asymptotically optimal one $\tilde \tau.$ By choosing $N=500$, so that $%
\lambda _{N}\approx 10^{6}$, and $\alpha =0.5$, in Figure \ref{fig3} the
errors are reported. In Figure \ref{fig4} we also plot the values of $\tau
_{k}$. We remark that for other choice of $\alpha $ the results are
qualitatively identical.

\begin{figure}
\includegraphics[width=1.00\textwidth]{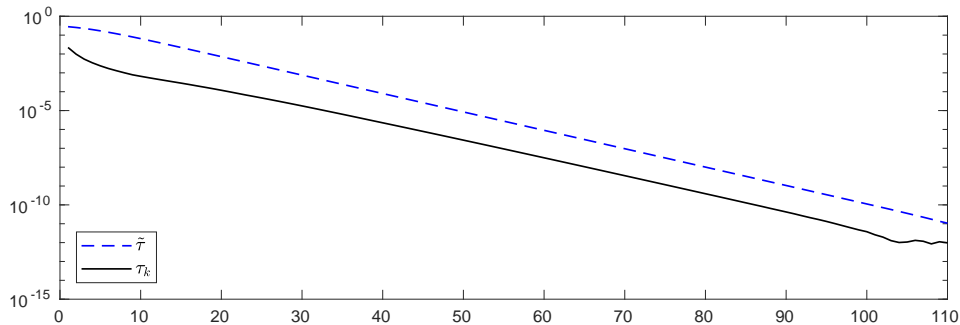}
\caption{Error comparison for Example \ref{e2} using  $\tilde \tau$ as in (\ref{tauopt}) 
and $\tau _{k}$ as in (\ref{tk2}), $N=500$ and  $\alpha =0.5.$}
\label{fig3}
\end{figure}

\begin{figure}
\includegraphics[width=1.00\textwidth]{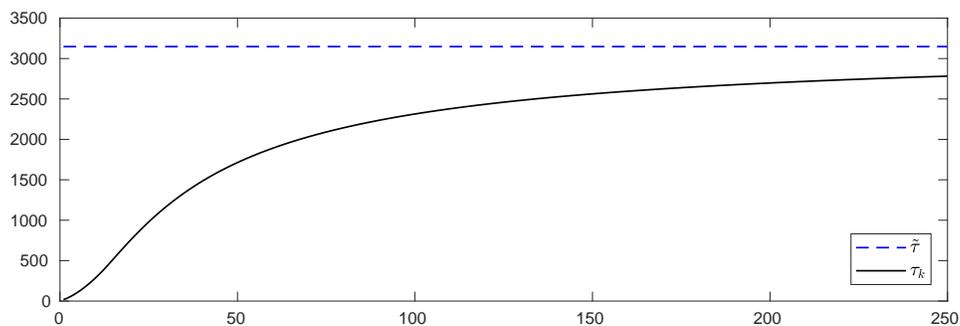}
\caption{Selected values for  $\tau _{k}$ defined by (\ref{tk2})  for Example \ref{e2} with $N=500$ and $\alpha =0.5$.}
\label{fig4}
\end{figure}

Finally, still working with this example, we show the accuracy of the bound (%
\ref{th2}) for $\alpha=0.25, 0.5, 0.75.$  The results are reported in Figure \ref{fig5}.

\begin{figure}
\includegraphics[width=1.00\textwidth]{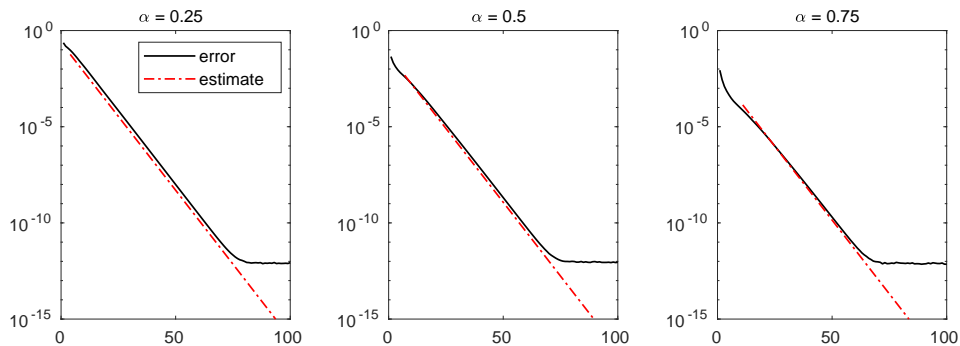}
\caption{Error and error estimate (\ref{th2}) with respect to $k$  for Example 2 with $N=200.$} 
\label{fig5}
\end{figure}

\end{example}


\section{Conclusions} 

\label{sec7}

In this paper we have considered rational approximations of fractional powers of unbounded
positive operators obtained by exploiting the connection between Gauss-Jacobi quadrature on
Markov functions and Pad\'{e}  approximants. Using classical results in approximation theory, we
have provided very sharp a priori estimates of the truncation errors that allow to properly
define the parameter $\tau$. The numerical experiments confirm that such analysis improves some existing results.






\end{document}